\newtheorem{theorem}{Theorem}[section]
\newtheorem{proposition}[theorem]{Proposition}
\newtheorem{lemma}[theorem]{Lemma}
\theoremstyle{definition}
\newtheorem{definition}[theorem]{Definition}
\theoremstyle{problem}
\newcommand{\Aut}{\mathrm{Aut}}
\newcommand{\QQ}{\mathbf{Q}}
\newcommand{\CC}{\mathbf{C}}
\newcommand{\NN}{\mathbf{N}}
\newcommand{\bd}{\partial}
\newcommand{\SL}{\operatorname{SL}}
\newcommand{\eqcomma}{\ensuremath{\, \text{,}}} 
\newcommand{\conv}{\ensuremath{\mathrm{conv}}}
\newcommand{\rmd}{\ensuremath{\mathrm{d}}}
\newcommand{\cspan}{\ensuremath{\mathop{\overline{\mathrm{span}}}}}
\newcommand{\contc}{\ensuremath{\mathrm{C}_\mathrm{c}}}
\newcommand{\dimvec}{\ensuremath{\mathop{\mathrm{dim}}}}
\def\og{\leavevmode\raise.3ex\hbox{$\scriptscriptstyle\langle\!\langle$~}}
\def\fg{\leavevmode\raise.3ex\hbox{~$\!\scriptscriptstyle\,\rangle\!\rangle$}}
\title{A note on type~${\rm I}$ groups acting on $d$--regular trees}
\author[1]{Corina Ciobotaru\thanks{corina.ciobotaru@unige.ch}}
\date{\today}
\begin{document}
\newcounter{qcounter}

\maketitle

\begin{abstract}
In this note, we give new examples of type~${\rm I}$ groups generalizing a previous result of Ol'shanskii~\cite{olshanskii80}.  More precisely, we prove that all closed non-compact subgroups of $\Aut(T_{d})$ acting transitively on the vertices and on the boundary of a $d$-regular tree and satisfying Tits' independence property are type~${\rm I}$ groups.  We claim no originality as we use standard ingredients: the polar decomposition of those groups and the admissibility of all their irreducible unitary representations.
\end{abstract}

\section{Preliminaries: groups acting on trees}
\label{sec:groups-acting-on-trees}

Denote by $T$ an infinite locally finite tree and by $T_{d}$ the $d$-regular tree with $d \geq 3$. Moreover we say that a tree is thick if the degree of every vertex is at least $3$.  Remark that $T$ is endowed with a canonical metric structure, where an edge is considered of length one, and so with a topology.  The geodesic between two vertices of $T$ is the shortest path in $T$ connecting them.
The automorphism group $\Aut(T)$ is a totally disconnected locally compact group equipped with the topology of pointwise convergence.  If $G \leq \Aut(T)$ is a closed subgroup and $S \subset T$ is some set of vertices, we define the pointwise stabilizer subgroup $G_{S}:=\{ g \in G\; | \; g(s)=s  \text{ for all } s \in S\}$.

In what follows, we denote by $\bd T$ the boundary of the tree $T$, meaning that after fixing a vertex $x_{0}$ in $T$, a boundary point in $\bd T$ is associated with a unique infinite geodesic ray starting at $x_{0}$.  The topology on $T$ can be extended to $\bd T$ as follows.  Let $x_{0}$ be a fixed vertex of $T$.  Let $\xi \in \bd T$ and corresponding to $\xi$  let $\{x_{n}\}_{n \geq 0}$ be the unique infinite sequence of vertices in $T$ starting at $x_{0}$ and determining the infinite geodesic.   A basic neighborhood of $\xi$ is the set $U(x_{0},\xi, n):=\{ x \in T \cup \bd T \; | \; [x_{0}, x_{n}] \subset [x_{0},x)\}$, where by $[y,z]$ we denote the geodesic segment in $T$ between two vertices $y,z$ in $T$.  The topology on $T \cup \bd T$ with basis given by the open balls $B(x,r) \subset T$ together with the collections of all sets $\{U(x_{0}, \xi, n)\}_{\xi \in \bd T, n>0}$ is called the cone topology on $T \cup \bd T$.  This topology does not depend of the choice of the vertex $x_{0}$.  If $T$ is a locally finite tree, then $T \cup \bd T$ is compact in this cone topology.

For what follows we fix a vertex $x_0$ of $T$ and we denote by $B_{n}(x_0)$ be the closed ball of radius $n$ and of center $x_0$.  To simplify the notation for what follows, we denote $U_{n}:=G_{B_{n}(x_0)}$, where $G$ is a closed subgroup of $\Aut(T)$.  Remark that $U_{n}$ is a compact open subgroup of $G$, which is normal in $G_{x_0}$.

\medskip
For the convenience of the reader, we recall below some basic facts from the theory of groups acting by automorphisms on locally finite trees.  We start with the following well known equivalences.

\begin{proposition}[See \cite{FigaNebbia}, Lemma~3.1.1 of~\cite{BM00a} and Corollary~3.6~\cite{CaCi}]
\label{prop:characterisation-boundary-two-transitive-subgroup}
Let $T$ be a locally finite thick tree and let $G \leq \Aut(T)$ be a closed subgroup. Then the following are equivalent:
\begin{enumerate}
\item
  $G_{x}$ is transitive on $\bd T$, for every $x \in T$;
\item
  $G$ is non-compact and there exists $x \in T$ such that $G_{x}$ acts transitively on $\bd T$;
\item
  $G$ is non-compact and acts transitively on $\bd T$;
\item
  $G$ is $2$-transitive on $\bd T$.
\end{enumerate}
\end{proposition}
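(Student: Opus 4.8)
The plan is to run the cycle of implications $(1)\Rightarrow(2)\Rightarrow(3)\Rightarrow(4)\Rightarrow(1)$, relying on two standard facts about groups acting on trees. First, a closed subgroup of $\Aut(T)$ is compact if and only if it fixes a vertex or the midpoint of an edge: one direction is the fixed-point theorem for actions on trees applied to a (bounded) orbit, the other is that vertex- and edge-midpoint-stabilizers in $\Aut(T)$ are profinite. Second, Tits' classification: every subgroup of $\Aut(T)$ either fixes a vertex or an edge-midpoint, or fixes a point of $\bd T$, or preserves a $2$-element subset of $\bd T$, or is of \emph{general type}, the last meaning that it contains a hyperbolic element (indeed two with disjoint axes); in the general-type case the set $E$ of endpoints of axes of hyperbolic elements of $G$ is a non-empty $G$-invariant subset of $\bd T$, and $G$ enjoys north--south / ping-pong dynamics on $T\cup\bd T$. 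I would also record an elementary lemma proved by a compactness argument: for a fixed vertex $x_0$, the profinite group $G_{x_0}$ is transitive on $\bd T$ if and only if it is transitive on the sphere $S_n(x_0)$ for every $n\geq1$. Indeed, given ends $\xi,\eta$ with associated rays $(x_0,x_1,\dots)$ and $(x_0,y_1,\dots)$, pick $g_n\in G_{x_0}$ with $g_nx_n=y_n$; as $g_n$ fixes $x_0$ it then maps $x_k\mapsto y_k$ for all $k\leq n$, and any accumulation point of $(g_n)$ in the compact group $G_{x_0}$ sends $\xi$ to $\eta$; the converse is clear.

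The bookkeeping implications are quick. For $(1)\Rightarrow(2)$ the point is that $(1)$ forces non-compactness: otherwise $G$ fixes a vertex $v$ or an edge $e$, and then for a neighbour $y$ of $v$ (resp.\ an endpoint $y$ of $e$) the stabilizer $G_y$ fixes a whole edge pointwise, hence preserves the two half-trees that edge bounds and is not transitive on $\bd T$, contradicting $(1)$ for $y$; the existential clause of $(2)$ is contained in $(1)$. The implication $(2)\Rightarrow(3)$ is immediate, since $G_x\leq G$. Half of $(4)\Rightarrow(1)$ is also bookkeeping: $(4)$ forces non-compactness, because if $G$ fixed a vertex or an edge-midpoint $p$ then the $G$-invariant function assigning to a pair of distinct ends the distance from $p$ to the geodesic line joining them would be constant, which is impossible on a thick tree (where it takes infinitely many values); and since $(4)$ makes $G$ transitive on $\bd T$ with $|\bd T|>2$, Tits' classification leaves only the general-type case, so $E=\bd T$ (a non-empty $G$-invariant subset of a set on which $G$ acts transitively), i.e.\ every end is the endpoint of a hyperbolic axis.

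The two nontrivial implications are $(3)\Rightarrow(4)$ and the remaining part of $(4)\Rightarrow(1)$, and they share a common engine: the dynamics of a general-type group on $\bd T$ together with the compactness lemma. In $(3)\Rightarrow(4)$, the argument of the previous paragraph again shows that a non-compact $G$ transitive on $\bd T$ is of general type with $E=\bd T$; using transitivity on $\bd T$ to align first coordinates, double transitivity reduces to proving that $P_\zeta:=\Stab_G(\zeta)$ acts transitively on $\bd T\setminus\{\zeta\}$ for one (hence every) end $\zeta$. Since $\zeta\in E$ one fixes a hyperbolic $h\in P_\zeta$ with attracting fixed point $\zeta$; its $P_\zeta$-conjugates are hyperbolics attracting to $\zeta$ whose repelling endpoints sweep out the $P_\zeta$-orbit of $h^-$, and combining their north--south dynamics on the horospheres centred at $\zeta$ with a limiting argument in a compact subgroup of $P_\zeta$ moves any prescribed end $\neq\zeta$ to any other. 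In $(4)\Rightarrow(1)$, once non-compactness and $E=\bd T$ are in hand one fixes a vertex $x$ and, by the lemma, must only show $G_x$ transitive on each $S_n(x)$; this is done by a local-to-global argument of the same flavour, using double transitivity of $G$ on $\bd T$ to reposition the ends through a given pair of vertices of $S_n(x)$, realising those ends on axes of hyperbolic elements, and pinning down the base vertex by a final limiting argument.

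I expect these two upgrades -- from transitivity to double transitivity on $\bd T$, and from double transitivity to transitivity of every vertex stabilizer -- to be the genuine obstacle: neither is a formal consequence of transitivity, since a purely combinatorial attempt only ever pins down an oriented geodesic, not a designated vertex on it. What rescues them is the interplay of thickness of $T$, which always supplies an extra direction for ping-pong, with the compactness of $T\cup\bd T$, which underwrites the limiting arguments turning ``transitive on every sphere about $x$'' into ``transitive on $\bd T$'', fed by the wealth of hyperbolic elements handed over by Tits' classification once $G$ is recognised to be of general type.
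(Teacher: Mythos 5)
The paper does not prove this proposition at all: it is quoted as a known result, with the actual arguments delegated to Fig\`a-Talamanca--Nebbia, to Lemma~3.1.1 of Burger--Mozes, and to Corollary~3.6 of Caprace--Ciobotaru. So there is no in-paper proof to match your write-up against, and you are in effect re-proving a cited result. Your global architecture is sound: the cycle $(1)\Rightarrow(2)\Rightarrow(3)\Rightarrow(4)\Rightarrow(1)$, the characterisation of compactness by bounded orbits, Tits' trichotomy forcing general type, the observation that $E=\bd T$ by invariance plus transitivity, and the compactness lemma identifying transitivity of $G_{x}$ on $\bd T$ with transitivity on every sphere $S_n(x)$ are all correct, as are the bookkeeping implications (including the nice argument that $(1)$ and $(4)$ each force non-compactness).

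The genuine gap is exactly where you say you expect it to be: the two transitivity upgrades are asserted, not proved. For $(3)\Rightarrow(4)$, the north--south dynamics of a hyperbolic $h\in P_\zeta$ attracting to $\zeta$ only yields that $h^{-n}\omega\to h^-$ for every $\omega\neq\zeta$, i.e.\ that $h^-$ lies in the \emph{closure} of every $P_\zeta$-orbit on $\bd T\setminus\{\zeta\}$; upgrading ``orbit closure'' to ``orbit'' is precisely where the closedness of $G$ and the compactness of vertex stabilizers must be deployed in a specific way (and where the statement genuinely fails for non-closed subgroups), and the phrase ``a limiting argument in a compact subgroup of $P_\zeta$'' does not supply that deployment --- note that the obvious candidate compact subgroups are stabilizers of vertices on the axis, and their transitivity on a neighbourhood of $h^-$ is essentially what is being proved. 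Likewise, in $(4)\Rightarrow(1)$ the ``final limiting argument pinning down the base vertex'' is the crux: composing with powers of a hyperbolic element along a prescribed line moves the candidate image of $x$ only in increments of the translation length, so it cannot by itself place $gx$ at $x$, and no mechanism is given that does. Both steps are the actual content of the cited Lemma~3.1.1 and Corollary~3.6; as written, your proposal diagnoses the difficulty accurately but does not resolve it.
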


We also have the following proposition from~\cite[Proposition~10.2]{FigaNebbia}.
\begin{proposition}[See Proposition 10.2 of \cite{FigaNebbia}]
\label{prop:characterisation-boundary-transitive-subgroup}
Let $T$ be a locally finite thick tree and $G$ be a closed non-compact subgroup of $\Aut(T)$ that acts transitively on the boundary. Then $G$ is transitive on the vertices or it is transitive on the edges of $T$. In particular $T$ is a regular or a bi-regular tree.
\end{proposition}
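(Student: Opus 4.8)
The plan is to first strengthen transitivity on $\bd T$ into transitivity of each vertex stabiliser on the adjacent vertices, and then to bootstrap this ``local'' transitivity into a statement about the global orbit structure of $G$ on $T$. For the first part I would fix a vertex $x$ and invoke Proposition~\ref{prop:characterisation-boundary-two-transitive-subgroup} (implication $(3)\Rightarrow(1)$), which, using that $G$ is non-compact and transitive on $\bd T$, gives that $G_x$ is transitive on $\bd T$. For each neighbour $y$ of $x$ let $\Omega_y\subseteq\bd T$ be the shadow of $y$ seen from $x$, i.e.\ the set of ends $\eta$ whose geodesic ray $[x,\eta)$ starts with the edge $xy$; since $T$ is thick and locally finite these shadows are non-empty and partition $\bd T$, and they are indexed by the neighbours of $x$. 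Each $g\in G_x$ permutes the neighbours of $x$ and satisfies $g\,\Omega_y=\Omega_{gy}$, directly from the definition of the ray. Hence, given neighbours $y,y'$ of $x$, I would pick $\eta\in\Omega_y$, $\eta'\in\Omega_{y'}$ and $g\in G_x$ with $g\eta=\eta'$; then $\eta'\in g\,\Omega_y=\Omega_{gy}$, and disjointness of distinct shadows forces $gy=y'$. So $G_x$ is transitive on the neighbours of $x$, for every vertex $x$.

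Next I would fix a vertex $x_0$ and a neighbour $x_1$ and prove, by induction on $n=d(x_0,v)$, that every vertex $v$ at even distance from $x_0$ lies in $Gx_0$ and every vertex at odd distance lies in $Gx_1$ (the cases $n\le 1$ being immediate from the previous paragraph). For $n\ge 2$, writing $v'$ for the neighbour of $v$ on $[x_0,v]$, the inductive hypothesis gives $v'=g x_i$ for the $i\in\{0,1\}$ of the right parity, so $g^{-1}v$ is a neighbour of $x_i$; by local transitivity all neighbours of $x_i$ lie in one $G$-orbit, and since $x_0$ and $x_1$ are adjacent this orbit is $Gx_1$ when $i=0$ and $Gx_0$ when $i=1$, placing $v$ in the orbit opposite to $v'$. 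Thus $G$ has at most two orbits on vertices; when there are two they are exactly the two classes of the canonical bipartition of $T$. Since vertices in one orbit share a common degree, it follows immediately that $T$ is bi-regular, and regular as soon as $G$ is vertex-transitive.

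It then remains to decide the dichotomy. If the two orbits coincide, $G$ is transitive on the vertices (and $T$ is regular) and we are done. If they are distinct, I would show that $G$ is edge-transitive: given edges $\{a,b\}$ and $\{a',b'\}$ with $a,a'$ in one orbit and $b,b'$ in the other, pick $g\in G$ with $ga=a'$; then $gb$ and $b'$ are both neighbours of $a'$, so by local transitivity there is $h\in G_{a'}$ with $h(gb)=b'$, whence $hg$ carries $\{a,b\}$ to $\{a',b'\}$. In this case $T$ is bi-regular by the previous paragraph.

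The step I expect to be the main obstacle is the first one: converting the motion of boundary points into the motion of neighbouring vertices. The argument hinges on the fact that the shadows $\Omega_y$ genuinely partition $\bd T$ and are permuted by $G_x$ compatibly with the action on the neighbours of $x$; once this is set up carefully, everything else is routine bookkeeping with distances and the bipartition of $T$.
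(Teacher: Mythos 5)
Your argument is correct. Note that the paper itself offers no proof of this proposition: it is stated as a quoted result with a pointer to Proposition~10.2 of \cite{FigaNebbia}, so there is no in-text argument to compare against. Your derivation is a clean, self-contained version of the standard one: the reduction of boundary-transitivity to transitivity of $G_x$ on the neighbours of $x$ via the partition of $\bd T$ into shadows (using implication $(3)\Rightarrow(1)$ of Proposition~\ref{prop:characterisation-boundary-two-transitive-subgroup}, and thickness to guarantee each shadow is non-empty), the parity induction showing $V(T)=Gx_0\cup Gx_1$, and the two-orbit case yielding edge-transitivity and bi-regularity. All three steps are sound as written; the only hypothesis you lean on beyond the statement itself is the cited equivalence in Proposition~\ref{prop:characterisation-boundary-two-transitive-subgroup}, which the paper likewise takes from the literature.
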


Another well known fact, that one can easily deduce, is the polar decomposition of a closed non-compact subgroup $G \leq \Aut(T_{d})$ acting transitively on the vertices and on the boundary of the tree: $G=G_{x} \langle a \rangle G_{x}$, where $a \in G$ is a hyperbolic automorphism of translation length one and $x$ is a vertex contained in the translation axis of $a$ (see~\cite{BM00b}). 

\section{Type I groups}
Let us introduce the main definition of this note.

% \begin{definition}[Definition 5.4.2 in \cite{dixmier77}]
% \label{def:type-I-C*-alg} 
% We say that a \Cstar-algebra $A$ is of type~${\rm I}$ if for every of its unitary representations $\pi$, the von Neumann algebra generated by $\pi(A)$ is of type~${\rm I}$.  Moreover, we say that a group is of type~${\rm I}$ if its maximal group $C^*$-algebra is of type~${\rm I}$.
% \end{definition}

\begin{definition}[Definition 5.4.2 in \cite{dixmier77}]
\label{def:type-I-group}
A locally compact group $G$ is of \textbf{type~${\rm I}$}, if for every unitary representation $\pi$ of $G$, the von Neumann algebra $\pi(G)''$ generated by $\pi$ is of type~${\rm I}$.  
\end{definition}

Previously known examples of type~${\rm I}$ groups are all reductive p-adic Lie groups (see \cite{bernstein74-type-I}) and $\Aut(T_{d})$, the automorphisms group of a $d$-regular tree (see Ol'shanski~\cite{olshanskii80} and Demir~\cite{demir04}). 

%An immediate consequence, which is interesting in the context of our results in Section \ref{sec:group-vN-algebras}, is the following proposition.
%\begin{proposition}
%\label{prop:type-I-vN-algebras}
%The group von Neumann algebra of every reductive p-adic Lie group and of $\Aut(T_{d})$ is of type~${\rm I}$.
%\end{proposition}

A useful criterion for a locally compact group to be of type~${\rm I}$ is the following:

\begin{theorem}[See Definition 4.2.1, Proposition 4.3.4 and Theorems 5.5.2  and 15.5.2 in \cite{dixmier77}]
\label{thm:criterion-type-I-group}
Let $G$ be a locally compact group and $K$ a compact subgroup of $G$. Suppose that for any irreducible unitary representation $\pi$ of $G$ and any irreducible unitary representation $\sigma$ of $K$, $\sigma$ occurs with finite multiplicity in $\pi|_K$. Then $G$ is of type~${\rm I}$.
\end{theorem}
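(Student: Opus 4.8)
The plan is to transfer the problem to the full group $C^*$-algebra $C^*(G)$ and there reduce it to a statement involving only irreducible representations. By the results of Dixmier quoted in the statement, $G$ is of type~${\rm I}$ as soon as $C^*(G)$ is a postliminal (type~${\rm I}$) $C^*$-algebra, and the latter holds once one knows that $\pi(C^*(G)) \supseteq \mathcal{K}(H_{\pi})$ for every irreducible representation $\pi$ of $C^*(G)$, where $\mathcal{K}(H_{\pi})$ denotes the compact operators on the representation space $H_\pi$. Since irreducible unitary representations of $G$ correspond bijectively to irreducible representations of $C^*(G)$, it suffices to establish this containment for an arbitrary irreducible unitary representation $\pi$ of $G$, and this is where the finite-multiplicity hypothesis will enter.

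Fix such a $\pi$ and extend it canonically to the multiplier algebra $M(C^*(G))$, which contains $C^*(K)$ since $K$ is a compact subgroup of $G$. For $\sigma$ an irreducible representation of $K$, let $e_{\sigma} \in C^*(K)$ be the central idempotent supporting $\sigma$; then $p_{\sigma} := \pi(e_{\sigma})$ is the orthogonal projection of $H_{\pi}$ onto the $\sigma$-isotypic subspace $H_{\pi}(\sigma)$ of $\pi|_{K}$. Compactness of $K$ together with Peter--Weyl gives $\sum_{\sigma} p_{\sigma} = \id$ in the strong topology, while the hypothesis that $\sigma$ occurs with finite multiplicity in $\pi|_{K}$ says precisely that each $p_{\sigma}$ has finite rank.

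Next I would pass to a finite-dimensional corner. Since $e_{\sigma}$ is a multiplier of $C^*(G)$ we have $e_{\sigma}\,C^*(G)\,e_{\sigma} \subseteq C^*(G)$, so $p_{\sigma}\,\pi(C^*(G))\,p_{\sigma} = \pi\bigl(e_{\sigma}C^*(G)e_{\sigma}\bigr)$ is a $C^*$-subalgebra of $B\bigl(H_{\pi}(\sigma)\bigr)$; moreover, for every nonzero $\zeta \in H_{\pi}(\sigma)$, irreducibility of $\pi$ gives
\[
\overline{p_{\sigma}\pi(C^*(G))p_{\sigma}\,\zeta} \;=\; \overline{p_{\sigma}\pi(C^*(G))\,\zeta} \;=\; p_{\sigma}\,\overline{\pi(C^*(G))\zeta} \;=\; H_{\pi}(\sigma),
\]
so this corner acts irreducibly on the finite-dimensional Hilbert space $H_{\pi}(\sigma)$. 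As the only $C^*$-subalgebra of $M_{n}(\CC)$ acting irreducibly on $\CC^{n}$ is $M_{n}(\CC)$ itself, we conclude $p_{\sigma}\pi(C^*(G))p_{\sigma} = B\bigl(H_{\pi}(\sigma)\bigr)$. Choosing $\sigma$ with $H_{\pi}(\sigma) \neq 0$ and a unit vector $\eta \in H_{\pi}(\sigma)$, the rank-one projection $x \mapsto \langle x,\eta\rangle\eta$ therefore belongs to $\pi(C^*(G))$. Composing it on the left with $\pi(a)$, $a \in C^*(G)$, and using that $\pi(C^*(G))\eta$ is dense in $H_{\pi}$, one gets that the norm-closed algebra $\pi(C^*(G))$ contains every operator $x \mapsto \langle x,\eta\rangle\zeta$ with $\zeta \in H_\pi$; taking adjoints and products then yields every rank-one operator, hence all of $\mathcal{K}(H_{\pi})$, which is exactly the containment we needed.

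In the end the argument is a repackaging of standard $C^*$-algebra theory, so I expect no serious obstacle, only two points requiring care. The first is bookkeeping: verifying that the canonical extension of $\pi$ to $M(C^*(G))$ really sends $e_\sigma$ to the isotypic projection $p_\sigma$, and that $C^*(K)$ embeds in $M(C^*(G))$. The second, and the genuinely non-elementary input, is the characterization of postliminal $C^*$-algebras used in the first step, valid without separability assumptions, which is precisely why one leans on Dixmier's general machinery rather than on a short self-contained argument. Finally, note that the finite-multiplicity hypothesis is used in exactly one place, namely to guarantee that $p_{\sigma}$ has finite rank; that single fact is what turns an irreducible corner into a full matrix algebra and ultimately pushes the compact operators into $\pi(C^*(G))$.
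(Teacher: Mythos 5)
The paper states this criterion without proof, as a pure citation to Dixmier's book, so there is no in-paper argument to compare against; your proposal is a correct reconstruction of the standard argument behind exactly those citations (finite multiplicity makes the isotypic projections $p_\sigma$ finite rank, the corner $p_\sigma\pi(C^*(G))p_\sigma$ is then an irreducibly acting $*$-subalgebra of a matrix algebra and hence all of $B(H_\pi(\sigma))$, so $\pi(C^*(G))\supseteq\mathcal{K}(H_\pi)$ for every irreducible $\pi$, i.e.\ $C^*(G)$ is postliminal and therefore of type~${\rm I}$ --- the implication that needs no separability hypothesis). The two points you flag are genuinely the only ones requiring care and both are standard: the measure algebra $M(G)$ acts on $C^*(G)$ by multipliers, so the idempotent measure $e_\sigma$ supported on $K$ makes sense there and $\pi(e_\sigma)=\int_K d_\sigma\overline{\chi_\sigma(k)}\,\pi(k)\,\rmd k$ is the $\sigma$-isotypic projection by Peter--Weyl.
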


Before proceeding with the theory of unitary representations of groups acting on trees, let us give the following easy general lemma:

\begin{lemma}
\label{lem:fixed-vectors}
Let $(\pi,\mathcal{H})$ be a unitary representation of a totally disconnected locally compact group $G$. Then there is a compact open subgroup $K$ in $G$ such that $\pi|_{K}$ has a fixed vector.
\end{lemma}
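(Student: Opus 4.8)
The plan is to exploit the fact that a unitary representation of a totally disconnected locally compact group is continuous, and continuity at the identity together with the existence of a neighbourhood basis of the identity consisting of compact open subgroups forces the appearance of non-trivial fixed vectors for small enough such subgroups. Concretely, fix a unit vector $v \in \mathcal{H}$. By strong continuity of $\pi$, for $\varepsilon = 1$ there is an open neighbourhood $V$ of the identity in $G$ such that $\|\pi(g)v - v\| < 1$ for all $g \in V$. Since $G$ is totally disconnected and locally compact, van Dantzig's theorem gives a compact open subgroup $K \subseteq V$.

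Now I would consider the operator obtained by averaging $\pi$ over $K$ with respect to a normalised Haar measure on $K$, namely $P := \int_{K} \pi(k)\, \rmd k$. Standard arguments show $P$ is the orthogonal projection onto the subspace $\mathcal{H}^{K}$ of $K$-fixed vectors (using invariance of Haar measure on the compact group $K$ together with unitarity). It remains to check that $Pv \neq 0$, which then furnishes a non-zero $K$-fixed vector. For this, estimate
\[
\|Pv - v\| = \left\| \int_{K} (\pi(k)v - v)\, \rmd k \right\| \leq \int_{K} \|\pi(k)v - v\|\, \rmd k < 1,
\]
using that $K \subseteq V$ and that the Haar measure on $K$ is a probability measure. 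Hence $\|Pv\| \geq \|v\| - \|Pv - v\| > 0$, so $Pv$ is the desired non-zero vector fixed by $\pi|_{K}$.

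I do not anticipate a genuine obstacle here, as the statement is the standard fact that continuous unitary representations of totally disconnected groups are smooth in the sense that every vector is fixed by some compact open subgroup; the only points requiring a little care are invoking van Dantzig's theorem to produce the compact open subgroup inside the neighbourhood $V$, and justifying that $P$ is indeed the projection onto $\mathcal{H}^{K}$ (equivalently, that $\pi(k)Pv = Pv$ for all $k \in K$, which follows from left-invariance of Haar measure on $K$). One could alternatively phrase the whole argument without introducing $P$ explicitly: take any vector $w$ in the closed convex hull of the orbit $\{\pi(k)v : k \in K\}$ that is closest to the origin — or equivalently the circumcentre of that orbit — and observe it is $K$-invariant and non-zero because the orbit is contained in the open unit ball around $v$. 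I would present the averaging-projection version, as it is the cleanest and makes the finite-multiplicity applications later in the paper transparent.
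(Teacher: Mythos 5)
Your proof is correct and is essentially the argument in the paper: the paper also picks a compact open subgroup $K_n$ moving a unit vector $\xi$ by less than $1/2$ and produces a non-zero fixed vector as a $K_n$-fixed point of the closed convex hull of the orbit $K_n\cdot\xi$, which is exactly the convex-hull/circumcentre variant you mention at the end (and your averaging operator $P$ just exhibits that fixed point explicitly as the Haar average). No gaps; the two presentations differ only cosmetically.
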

\begin{proof}
Let $(K_{n})_{n>0}$ be a neighborhood base of $e \in G$ consisting of compact open subgroups. Take $\xi \in \mathcal{H}$ with norm one. Then there is $n \in \mathbb{N}$ such that $K_{n} \cdot \xi \subset B(\xi,1/2)$. So the convex closure $C:= \overline{\conv}(K_{n} \cdot \xi)$ does not contain $0$. Indeed, for all $c_{i}>0$ such that $\sum\limits_{i}c_{i}=1$ and for all $\eta_{i} \in K_{n} \cdot \xi$, we have 
\[
||\sum\limits_{i}c_{i}\eta_{i}-\xi|| \leq \sum\limits_{i}c_{i}||\eta_{i}-\xi|| \leq 1/2.
\]
Since $K_{n}$ is compact there is a $K_{n}$-fixed point in $C$. By  putting $K := K_n$, this finishes the proof.
\end{proof}

\section{ On unitary representations of groups acting on trees}
For the subgroups of $\Aut(T_{d})$, the theory of unitary representations is very well developed only under some assumptions.  Hence, from now on we assume $G$ to be a closed non-compact subgroup of $\Aut(T_{d})$ acting transitively on vertices and on the boundary of $T_d$.  For a unitary representation $(\pi, \mathcal{H})$ of $G$ we denote by $M_{\pi}$ the set of minimal complete finite subtrees $S$ of $T_{d}$ such that $(\pi, \mathcal{H})$ has a $G_{S}$-invariant non-zero vector.  Using Lemma \ref{lem:fixed-vectors}, one obtains the following, well-known, subdivision of irreducible unitary representations $(\pi, \mathcal{H})$ of $G$:
\begin{enumerate}
\item
  $(\pi, \mathcal{H})$ is super-cuspidal if there is an element of $M_{\pi}$ that is neither a point nor an edge;
\item
  $(\pi, \mathcal{H})$ is special if there exists an element of $M_{\pi}$ that is an edge;
\item
  $(\pi, \mathcal{H})$ is spherical if there exists an element of $M_{\pi}$ that is exactly one vertex.
\end{enumerate}

The cases (ii) and (iii) are described in~\cite[Chapter III, resp. Chapter II]{FigaNebbia}.  There exist only two inequivalent special unitary representations, which are in fact the restriction to $G$ of the special unitary representations of $\Aut(T_{d})$.  Also the spherical unitary representations are nothing but the restriction to $G$ of the spherical unitary representations of $\Aut(T_{d})$. Those unitary representations are completely classified in \cite{FigaNebbia}.  Moreover, from~\cite[Lemma~2.1 of Chapter III, resp., Chapter II]{FigaNebbia} or~\cite[Appendix]{demir04}, we obtain that all special and spherical representations of $G$ are admissible.

\begin{definition}
Let $G \leq \Aut(T_{d})$ and $(\pi, \mathcal{H})$ be a unitary representation of $G$. We say that $(\pi, \mathcal{H})$ is \textbf{admissible} if for every compact open subgroup $K$ of $G$ the dimension of the subspace of all $K$-invariant non-zero vectors of $\mathcal{H}$ is finite.
\end{definition}

To study the remaining case of super-cuspidal representations we add an extra condition on the group $G$.  It is called Tits' independence property.  This property guarantees the existence of `enough' rotations in the group $G$.  It was used in the work of Tits~\cite{Ti70}) as a sufficient condition to prove simplicity for `large' subgroups of $\Aut(T_{d})$.  Also in his thesis, Amann~\cite[Theorem~2]{Amann} used it to give a complete classification of all super-cuspidal representations of a closed subgroup in $\Aut(T_{d})$ acting transitively on the vertices and on the boundary of $T_d$ and having Tits' independence property. For a closed subgroup of $\Aut(T_{d})$ acting transitively on the vertices and on the boundary of $T_d$ but without Tits' independence property less is known about the complete classification of all super-cuspidal representations or even about their admissibility.

\begin{definition}
Let $G \leq \Aut(T_d)$.  We say that $G$ has \textbf{Tits' independence property} if for every edge $e$ of $T$ we have the equality $G_{e}=G_{h_{1}} G_{h_{2}}$, where $h_{i}$ are the two half sub-trees emanating from the edge $e$.
\end{definition}

Examples of groups having this property are of course the groups $\Aut(T_{d})$ as well as the universal groups $U(F)$ introduced by Burger--Mozes in~\cite{BM00a, BM00b}. A group without this property is $\SL(2,\QQ_{p})$, for example.

\medskip
As mentioned above, super-cuspidal unitary representations of closed subgroup $G \leq \Aut(T_{d})$, with Tits' independence property,  are studied in Amann~\cite{Amann}. From there \cite[Chapter 2]{Amann} we know that if $(\pi, \mathcal{H})$ is a super-cuspidal representation of $G$ with a non-zero $G_{S}$-invariant vector $v$ and $S \in M_{\pi}$, then the matrix coefficient $g \mapsto \langle \pi(g)v, u\rangle$ has compact support, for every non-zero $G_{S'}$-invariant vector $u$, where $S'$ is a complete finite subtree of $T_{d}$. Moreover, by Demir~\cite[Theorem~3.4.]{demir04} we have:

\begin{theorem}{\cite[Theorem. 3.4.]{demir04}}
\label{thm:adm:G}
Let $G$ be a closed non-compact subgroup of $\Aut(T_{d})$ acting transitively on the vertices and on the boundary of $T_d$ and having Tits' independence property. Then any super-cuspidal representation of $G$ is admissible.
\end{theorem}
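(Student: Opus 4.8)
The plan is to reduce admissibility of a super-cuspidal $(\pi,\mathcal{H})$ to the finite-dimensionality of each fixed-point space $\mathcal{H}^{U_{n}}$, and then to extract this from the compact support of cuspidal matrix coefficients together with the smallness of the ball stabilizers relative to $G_{x_{0}}$. First I would use that $(U_{n})_{n>0}$ is a neighbourhood base of the identity of $G$ consisting of compact open subgroups: any compact open subgroup $K\leq G$ contains some $U_{n}$, so $\mathcal{H}^{K}\subseteq\mathcal{H}^{U_{n}}$, and it suffices to prove $\dim\mathcal{H}^{U_{n}}<\infty$ for every $n$; moreover, since $\mathcal{H}^{U_{m}}\subseteq\mathcal{H}^{U_{n}}$ whenever $m\leq n$, it is enough to treat $n$ large. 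As $\pi$ is super-cuspidal there is $S\in M_{\pi}$ that is neither a vertex nor an edge; using vertex-transitivity I would translate $S$ by an element of $G$ so that $x_{0}\in S$ (the translate is still in $M_{\pi}$ and carries along a non-zero $G_{S}$-fixed vector), then take $n$ large enough that $S\subseteq B_{n}(x_{0})$, and fix a non-zero $v\in\mathcal{H}^{G_{S}}$. Then $U_{n}=G_{B_{n}(x_{0})}\subseteq G_{S}$, so $v\in\mathcal{H}^{U_{n}}$, i.e. $P_{n}v=v$, where $P_{n}$ is the orthogonal projection of $\mathcal{H}$ onto $\mathcal{H}^{U_{n}}$ (averaging $\pi$ over $U_{n}$ against normalised Haar measure). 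By irreducibility $\mathcal{H}$ is the closed span of $\pi(G)v$; applying the self-adjoint $P_{n}$ shows that $\mathcal{H}^{U_{n}}$ is the closed span of the vectors $P_{n}\pi(g)v=P_{n}\pi(g)P_{n}v$, $g\in G$, each of which depends only on the coset $gU_{n}$ (since $P_{n}v$ is $U_{n}$-invariant).

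The decisive step is a uniform vanishing estimate: there should be a constant $C_{n}$ such that $P_{n}\pi(g)v=0$ whenever $\dist(x_{0},gx_{0})>C_{n}$. Because $P_{n}\pi(g)v\in\mathcal{H}^{U_{n}}$ and $\langle P_{n}\pi(g)v,u\rangle=\langle\pi(g)v,u\rangle$ for $u\in\mathcal{H}^{U_{n}}$, this reduces to: $\langle\pi(g)v,u\rangle=0$ for every $u\in\mathcal{H}^{U_{n}}$ once $\dist(x_{0},gx_{0})$ is large. For a fixed $u\in\mathcal{H}^{U_{n}}=\mathcal{H}^{G_{B_{n}(x_{0})}}$, the matrix-coefficient result of Amann recalled above, applied with the complete finite subtree $S'=B_{n}(x_{0})$, gives that $g\mapsto\langle\pi(g)v,u\rangle$ has compact support; since $g\mapsto gx_{0}$ is continuous into the discrete set $T_{d}$, every compact subset of $G$ is contained in some $\{g:\dist(x_{0},gx_{0})\leq C\}$. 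To obtain a single $C_{n}$ valid for all $u$, I would observe that $V_{C}:=\{u\in\mathcal{H}^{U_{n}}:\langle\pi(g)v,u\rangle=0\ \text{whenever}\ \dist(x_{0},gx_{0})>C\}$ is a closed linear subspace of $\mathcal{H}^{U_{n}}$ and that $\bigcup_{C\in\NN}V_{C}=\mathcal{H}^{U_{n}}$, so by the Baire category theorem some $V_{C}$ is non-meagre and hence equals $\mathcal{H}^{U_{n}}$. (Alternatively, the uniformity can be read off from Amann's geometric argument, which is where Tits' independence enters: once $\dist(x_{0},gx_{0})>2n$, the subtrees $gS$ and $B_{n}(x_{0})$ lie in complementary half-trees, and the independence property forces the coefficient to vanish.)

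It remains to put these together. The set $\Omega:=\{g\in G:\dist(x_{0},gx_{0})\leq C_{n}\}=\{g:gx_{0}\in B_{C_{n}}(x_{0})\}$ is the union, over the finitely many vertices $w$ of $B_{C_{n}}(x_{0})$, of the sets $\{g:gx_{0}=w\}$, each of which is a coset of $G_{x_{0}}$; since $U_{n}$ has finite index in the compact open subgroup $G_{x_{0}}$ --- the quotient embeds in the (finite) automorphism group of $B_{n}(x_{0})$, a fact also reflected in the polar decomposition $G=G_{x_{0}}\langle a\rangle G_{x_{0}}$ --- the set $\Omega$ is a finite union of left cosets of $U_{n}$. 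Since $P_{n}\pi(g)P_{n}v$ depends only on $gU_{n}$ and vanishes for $g\notin\Omega$, the family $\{P_{n}\pi(g)P_{n}v:g\in G\}$ is finite, so its closed linear span $\mathcal{H}^{U_{n}}$ is finite-dimensional. As $n$, and therefore $K$, were arbitrary, every super-cuspidal representation of $G$ is admissible. The genuinely delicate point is the uniform support bound of the second paragraph; everything else is bookkeeping with $[G_{x_{0}}:U_{n}]<\infty$ and the combinatorics of balls in $T_{d}$.
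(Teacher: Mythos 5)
Your proposal is correct, and it rests on the same pillars as the paper's proof: the reduction to showing $\dim(\mathcal{H}^{U_n})<\infty$ for each $n$, the map $f(g)=P_n\pi(g)v$ (which the paper writes as $\int_{U_n}\pi(kg)v\,\rmd\mu(k)$) whose values span a dense subspace of $\mathcal{H}^{U_n}$ by irreducibility and which is constant on left $U_n$-cosets, and Amann's compact support of super-cuspidal matrix coefficients against $U_n$-invariant vectors. Where you genuinely diverge is the finishing mechanism. The paper argues by contradiction: assuming $\dim(\mathcal{H}^{U_n})=\infty$, it extracts a linearly independent sequence $f(g_k)$ with $g_k\to\infty$, applies Gram--Schmidt to get an orthonormal sequence $(w_k)$, and builds the single test vector $w=\sum_k k^{-1}w_k$, which pairs non-trivially with infinitely many $f(g_k)$ and yet, by Amann, yields a compactly supported coefficient $g\mapsto\langle\pi(g)v,w\rangle$ --- a contradiction. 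You instead make the support bound uniform over all $u\in\mathcal{H}^{U_n}$ via a Baire category argument on the closed subspaces $V_C$ (a proper closed subspace is nowhere dense, so some $V_{C_n}$ must be everything), and then bound $\dim(\mathcal{H}^{U_n})$ directly by counting the left $U_n$-cosets contained in $\{g:\dist(x_0,gx_0)\le C_n\}$, using $[G_{x_0}:U_n]<\infty$ and the finiteness of $B_{C_n}(x_0)$. Your route is slightly longer but buys a quantitative conclusion, $\dim(\mathcal{H}^{U_n})\le |B_{C_n}(x_0)|\cdot[G_{x_0}:U_n]$, which anticipates the uniform bound of Theorem~\ref{thm:bounded:dim} (though your $C_n$, extracted by Baire, is not effective and not uniform in $\pi$). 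You are also more careful than the printed proof on one point: you choose $v$ to be a $G_S$-invariant vector with $S\in M_\pi$ translated to contain $x_0$, which is exactly the hypothesis under which Amann's compact-support result is quoted, whereas the paper starts from an arbitrary $v\in\mathcal{H}\setminus\{0\}$.
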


For the convenience of the reader, we include the proof from \cite{demir04}.

\begin{proof}
Let $(\pi, \mathcal{H})$ be a super-cuspidal representation of $G$ and recall that $\{U_n\}_{n}$ is a neighborhood basis of $\{e\}$ consisting of compact open subgroups.  It suffices to show that $\dim (\mathcal{H}^{U_n}) < + \infty$ for all $n \in \NN$.

So fix $n \in \NN$ and take $v \in \mathcal{H} \setminus \{0\}$.  We define the $U_n$-invariant function $f: G \to \mathcal{H}^{U_{n}}$ by $f(g):=\int_{U_{n}}\pi(kg)(v) \rmd \mu(k)$, where $\mu$ is the Haar measure on $U_{n}$.  Assume that $\dim(\mathcal{H}^{U_{n}})$ is infinite.  Since $\pi $ is irreducible, we have $\cspan{\pi(G)v}=\mathcal{H}$.  For $g \in G$, the element $f(g)$ is the orthogonal projection of $\pi(g)v$ onto $\mathcal{H}^{U_n}$.  These facts together imply that there is a sequence $\{g_k\}_{k \in \mathbb{N}} \subset G$ such that $\{f(g_k)\}_{k \in \mathbb{N}} \subset \mathcal{H}^{U_{n}}$ is linearly independent.  As $f$ is $U_n$-invariant, the sequence $\{g_k\}_{k \in \mathbb{N}}$ is discrete and hence it goes to infinity in $G$.  

Let $E$ be the closed subspace of $\mathcal{H}^{U_{n}}$ generated by $\{f(g_k)\}_{k \in \mathbb{N}}$.  By Gram-Schmidt orthogonalization one obtains a sequence of orthonormal vectors $\{w_k\}_{k \in \mathbb{N}}$ such that for every $K \in \NN$ the vector spaces spanned by $\{f(g_k)\}_{k < K}$ and by $\{w_k\}_{k < K}$ are the same.  Define the vector $w :=  \sum_k \frac{1}{k} w_k$.  We claim that this vector $w$ has non-zero scalar product with infinitely many vectors from $\{f(g_k)\}_{k \in \NN}$.  Indeed, if this is not the case, then $w$ would be in the span of finitely many vectors $\{f(g_k)\}_{k \in \NN}$.  But then $w$ would be in the span of finitely many vectors $\{w_k\}_{k \in \NN}$ too.  This is a contradiction.

By \cite[Chapter 2]{Amann}, the function $g \mapsto \langle \pi(g) v, w \rangle$ is compactly supported.  Since
\begin{equation*}
  \langle  f(g), w \rangle = \int_{U_n} \langle \pi(k g)(v) , w \rangle \rmd \mu (k)
  \eqcomma
\end{equation*}
we have that $f$ is also compactly supported.  This contradicts $\langle f(g_k) , w \rangle \neq 0$ for infinitely many $k \in \NN$.  We have finished the proof.
\end{proof}

Summarizing the previous discussion, we showed:
\begin{theorem}
\label{thm:adm:com:G}
 Let $G \leq \Aut(T_{d})$ be a closed non-compact subgroup acting transitively on the vertices and on the boundary of $T_d$ and having Tits' independence property. Then all its irreducible unitary representations are admissible.
\end{theorem}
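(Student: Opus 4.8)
The plan is to run the trichotomy of irreducible unitary representations recalled above and to invoke the appropriate classification or admissibility result in each of the three classes. First I would check that $M_{\pi}$ is non-empty for every irreducible unitary representation $(\pi,\mathcal{H})$ of $G$. By Lemma~\ref{lem:fixed-vectors} there is a compact open subgroup $K \leq G$ with $\mathcal{H}^{K} \neq \{0\}$. Since $\{U_{n}\}_{n}$ is a neighbourhood basis of $e$ consisting of compact open subgroups, we have $U_{n} \subseteq K$ for some $n \in \NN$, whence $\{0\} \neq \mathcal{H}^{K} \subseteq \mathcal{H}^{U_{n}}$. As $U_{n} = G_{B_{n}(x_{0})}$ and $B_{n}(x_{0})$ is a complete finite subtree of $T_{d}$, this shows $M_{\pi} \neq \emptyset$. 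Choosing a minimal element $S \in M_{\pi}$, the subtree $S$ is either a single vertex, an edge, or strictly larger than an edge, so $(\pi,\mathcal{H})$ falls into (at least) one of the three classes: spherical, special, or super-cuspidal.

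Next I would dispatch the spherical and special cases. As recalled after the trichotomy, every spherical, respectively special, irreducible unitary representation of $G$ is the restriction to $G$ of a spherical, respectively special, representation of $\Aut(T_{d})$; these are completely classified in~\cite{FigaNebbia}, and by \cite[Lemma~2.1 of Chapter~III, resp.\ Chapter~II]{FigaNebbia} (equivalently \cite[Appendix]{demir04}) they are admissible, i.e.\ $\dim \mathcal{H}^{U_{n}} < +\infty$ for all $n$. Since any compact open $K \leq G$ contains some $U_{n}$, this gives $\dim \mathcal{H}^{K} \leq \dim \mathcal{H}^{U_{n}} < +\infty$, which is admissibility in the sense of the definition above. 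In the super-cuspidal case, admissibility is precisely the statement of Theorem~\ref{thm:adm:G}, whose proof uses Tits' independence property through Amann's result \cite[Chapter~2]{Amann} that the relevant matrix coefficients are compactly supported. Putting the three cases together yields that every irreducible unitary representation of $G$ is admissible, as claimed.

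The bulk of the work is concentrated in the super-cuspidal case, that is, in Theorem~\ref{thm:adm:G}; by comparison the spherical and special cases are immediate consequences of the classification over $\Aut(T_{d})$, and the only point needing a (routine) verification is that finite-dimensionality of the spaces of $U_{n}$-invariant vectors for all $n$ is equivalent to admissibility as defined here. Thus the main obstacle is not internal to this statement but lies upstream, in establishing the compact support of the matrix coefficients of super-cuspidal representations — the step that genuinely requires Tits' independence property and is the reason that hypothesis appears in the theorem.
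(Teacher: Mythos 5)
Your proposal is correct and follows essentially the same route as the paper, which proves this theorem simply by ``summarizing the previous discussion'': the trichotomy spherical/special/super-cuspidal, admissibility of the first two classes via the classification in \cite{FigaNebbia} (resp.\ \cite[Appendix]{demir04}), and Theorem~\ref{thm:adm:G} for the super-cuspidal case. Your additional verification that $M_{\pi} \neq \emptyset$ via Lemma~\ref{lem:fixed-vectors} is exactly the step the paper leaves implicit.
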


To the best of our knowledge those are the only known examples of totally disconnected locally compact groups for which all irreducible unitary representations are admissible.

We are now ready to prove the main theorem of this note.

\begin{theorem}
\label{thm:type-I-groups}
 Any closed non-compact subgroup of $\Aut(T_{d})$ acting transitively on the vertices and on the boundary of the tree and having Tits' independence property is of type~${\rm I}$.
\end{theorem}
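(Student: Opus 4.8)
The plan is to apply Dixmier's criterion, Theorem~\ref{thm:criterion-type-I-group}, with the compact subgroup $K := G_{x_0}$. Note that $K$ is indeed compact, since $G$ is closed in $\Aut(T_{d})$ and vertex stabilizers in $\Aut(T_{d})$ are compact. Thus it suffices to prove that for every irreducible unitary representation $(\pi, \mathcal{H})$ of $G$ and every irreducible unitary representation $\sigma$ of $K$, the multiplicity of $\sigma$ in $\pi|_{K}$ is finite.

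First I would record that $K = G_{x_0}$ is a profinite group: it is compact and totally disconnected, and the subgroups $U_{n} = G_{B_{n}(x_0)}$ form a neighbourhood basis of the identity in $K$ consisting of open subgroups that are, as noted in Section~\ref{sec:groups-acting-on-trees}, normal in $K$. Consequently any continuous irreducible unitary representation $\sigma$ of $K$ is finite-dimensional and has open kernel, hence $\Ker \sigma \supseteq U_{n}$ for some $n$; that is, $\sigma$ factors through the finite group $K / U_{n}$.

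Next, fix such a $\sigma$ and a corresponding $n$. Every subrepresentation of $\pi|_{K}$ isomorphic to $\sigma$ consists of vectors fixed by $U_{n}$, so the whole $\sigma$-isotypic subspace of $\mathcal{H}$ is contained in $\mathcal{H}^{U_{n}}$. By Theorem~\ref{thm:adm:com:G}, the representation $\pi$ is admissible, so $\dim \mathcal{H}^{U_{n}} < +\infty$. Hence the $\sigma$-isotypic subspace is finite-dimensional, and in particular the multiplicity of $\sigma$ in $\pi|_{K}$ is at most $\dim \mathcal{H}^{U_{n}} < +\infty$. Theorem~\ref{thm:criterion-type-I-group} then yields that $G$ is of type~${\rm I}$.

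As for difficulties: the genuine content is already packaged in Theorem~\ref{thm:adm:com:G}, the admissibility of all irreducible unitary representations of $G$, whose super-cuspidal case relies on Amann's compact-support result together with the argument reproduced in the proof of Theorem~\ref{thm:adm:G}. Once admissibility is available, the remaining step is the essentially formal observation that, for the profinite group $K = G_{x_0}$ equipped with the normal neighbourhood basis $\{U_{n}\}_{n}$ coming from $G$, each $K$-isotypic component of $\mathcal{H}$ lands inside the fixed space of one of the $U_{n}$. So I do not anticipate a real obstacle beyond these two routine verifications (compactness of $K$ and normality of the $U_{n}$ in $K$), both of which are already recorded in the preliminaries.
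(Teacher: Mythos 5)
Your proof is correct and follows essentially the same route as the paper: verify Dixmier's criterion (Theorem~\ref{thm:criterion-type-I-group}) with $K=G_{x}$ and reduce finiteness of multiplicities to the admissibility statement of Theorem~\ref{thm:adm:com:G}. The only (harmless) difference is in the last technical step: you invoke the fact that an irreducible unitary representation of the profinite group $K$ has open kernel containing some $U_{n}$, so its isotypic component lies in $\mathcal{H}^{U_{n}}$, whereas the paper uses Lemma~\ref{lem:fixed-vectors} to produce a compact open subgroup with a $\sigma$-fixed vector and derives the same contradiction.
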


\begin{proof}
To prove the theorem we verify the criterion from Theorem~\ref{thm:criterion-type-I-group} for our group $G$ and $K=G_{x}$, with $x$ a vertex in $T_{d}$. 

So let $(\pi,\mathcal{H})$ be an irreducible unitary representation of $G$. As $K$ is compact, we have the direct sum decomposition $\pi|_{K}=\bigoplus_{\pi_{i} } n_{i} \pi_{i}$, where $(\pi_{i})_{i}$ are pairwise non-isomorphic finite dimensional irreducible unitary representations of $K$ and $n_{i}$ is the multiplicity of $\pi_{i}$ in $\pi|_{K}$.  Suppose that $n_{i}=\infty$ for some $\pi_{i}$.  By Lemma~\ref{lem:fixed-vectors} applied to the compact subgroup $K$ there exists a compact open subgroup $K_{n} \subset K$ such that $\pi_{i}|_{K_{n}}$ has a fixed vector.  Therefore there is an infinite number of $K_{n}$-fixed vectors for the representation $\pi$ of $G$.  Invoking Theorem~\ref{thm:adm:com:G}, we obtain a contradiction.  This verifies the criterion of Theorem~\ref{thm:criterion-type-I-group} and finishes the proof.
\end{proof}

Let us finally give a slight strengthening of Theorem \ref{thm:adm:com:G}, which corresponds to the formulation of the main theorem of Bernstein~\cite{bernstein74-type-I} and of Demir~\cite{demir04}.

\begin{theorem}{\cite{bernstein74-type-I} and Theorem 3.3 in \cite{demir04}}
\label{thm:bounded:dim}
Let $G$ a closed non-compact subgroups of $\Aut(T_{d})$ acting transitively on the vertices and on the boundary of $T_d$ and having Tits' independence property.

For every compact open subgroup $K \leq G$ there exists $N=N(G,K)$ such that for every $(\pi, \mathcal{H})$ an irreducible unitary representation of $G$ we have $\dim(\mathcal{H}^K) \leq  N$.
\end{theorem}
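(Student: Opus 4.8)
The plan is to bound $\dim(\mathcal{H}^{K})$ uniformly by combining the polar decomposition of $G$ with the fact that super-cuspidal representations are square-integrable, working inside the convolution algebra of $K$-bi-invariant functions. First I would reduce to $K=U_{m}$: as $\{U_{n}\}_{n}$ is a neighbourhood basis of $e\in G$ and $K$ is open, $U_{m}\subseteq K$ for some $m$, so $\mathcal{H}^{K}\subseteq\mathcal{H}^{U_{m}}$ and it suffices to bound $\dim(\mathcal{H}^{U_{m}})$ over all irreducible $\pi$. The non-super-cuspidal cases are quick: there are only two special representations, both admissible; and every spherical representation of $G$ is the restriction of a spherical representation of $\Aut(T_{d})$, on whose Hilbert space $U_{m}\subseteq G_{x_{0}}$ acts independently of the continuous parameter, so the dimension of the $U_{m}$-fixed subspace --- finite by admissibility --- is the same throughout that family. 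Hence $\sup\dim(\mathcal{H}^{U_{m}})$ over all special and spherical $\pi$ equals a finite constant $N_{0}(G,m)$.

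Now let $\pi$ be super-cuspidal and recall the polar decomposition $G=\bigsqcup_{n\geq 0}G_{x_{0}}a^{n}G_{x_{0}}$, the $n$-th piece being $\{g\in G:\dist(x_{0},gx_{0})=n\}$. Let $\mathcal{A}_{m}$ be the convolution algebra of compactly supported $U_{m}$-bi-invariant functions $G\to\CC$, filtered by $\mathcal{A}_{m}^{(\leq L)}:=\{f\in\mathcal{A}_{m}:\supp f\subseteq\bigsqcup_{n\leq L}G_{x_{0}}a^{n}G_{x_{0}}\}$. Since $f\in\mathcal{A}_{m}^{(\leq L)}$ is a linear combination of the indicators of the $U_{m}$-double cosets it meets and each $G_{x_{0}}a^{n}G_{x_{0}}$ contains at most $[G_{x_{0}}:U_{m}]^{2}$ of them, $\dim\mathcal{A}_{m}^{(\leq L)}\leq (L+1)[G_{x_{0}}:U_{m}]^{2}=:q_{m}(L)<\infty$. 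Set $d:=\dim(\mathcal{H}^{U_{m}})$, finite by Theorem~\ref{thm:adm:com:G}. As $G$ is unimodular and, by the discussion preceding Theorem~\ref{thm:adm:G} together with admissibility, all matrix coefficients of $\pi$ between smooth vectors are compactly supported, $\pi$ is square-integrable with some formal degree $d_{\pi}>0$. Schur orthogonality then shows that for $v,u\in\mathcal{H}^{U_{m}}$ the function $f_{v,u}(g):=\langle\pi(g^{-1})v,u\rangle$ lies in $\mathcal{A}_{m}$ and that $\pi(f_{v,u})$ is the rank-one operator $\zeta\mapsto d_{\pi}^{-1}\langle\zeta,u\rangle v$. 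As $v,u$ vary these operators span $\End(\mathcal{H}^{U_{m}})\cong M_{d}(\CC)$, so $M_{d}(\CC)\subseteq\pi(\mathcal{A}_{m})$.

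The decisive point, and the step I expect to cost the most, is a \emph{uniform} support bound: there is an $L=L(G,m)$, independent of the super-cuspidal $\pi$ with $\mathcal{H}^{U_{m}}\neq 0$, with $f_{v,u}\in\mathcal{A}_{m}^{(\leq L)}$ for all $v,u\in\mathcal{H}^{U_{m}}$. Granting it, $\pi$ restricts to a linear surjection $\mathcal{A}_{m}^{(\leq L)}\twoheadrightarrow\End(\mathcal{H}^{U_{m}})$, whence $d^{2}\leq\dim\mathcal{A}_{m}^{(\leq L)}\leq q_{m}(L)$, and $N(G,K):=\max\{N_{0}(G,m),\lceil q_{m}(L)^{1/2}\rceil\}$ works. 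To get the uniform bound I would argue thus. If $\mathcal{H}^{U_{m}}\neq 0$ then, since $U_{m}=G_{B_{m}(x_{0})}$ and the family of complete finite subtrees $S'$ with $\mathcal{H}^{G_{S'}}\neq 0$ is upward closed and contains $B_{m}(x_{0})$, there is $S\in M_{\pi}$ with $S\subseteq B_{m}(x_{0})$; thus $S$ ranges over finitely many $G$-orbits of subtrees of diameter at most $2m$. Amann's classification~\cite{Amann} realises $\pi$ inside a representation induced from a ``cuspidal'' irreducible representation $\sigma$ of the compact stabiliser of $S$, the cuspidality forcing $\sigma$ to factor through a finite quotient depending only on $S$, so $\dim\sigma$ is bounded in terms of $m$ alone; and Tits' independence property, applied to an edge $e$ separating $B_{m}(x_{0})$ from a far translate of it --- so $G_{e}=G_{h_{1}}G_{h_{2}}$ with the factors commuting and each $G_{h_{i}}$ fixing one of the two balls pointwise --- localises the matrix coefficient and forces $\langle\pi(g)v,u\rangle=0$ once $\dist(x_{0},gx_{0})$ exceeds a bound depending only on $m$ and the (bounded) diameter of $S$ and depth of $\sigma$, hence only on $G$ and $m$. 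Turning Amann's qualitative compact support into this bound, uniform over all super-cuspidal $\pi$ with a $U_{m}$-fixed vector, is the part requiring care; alternatively one can skip matrix coefficients altogether and run the super-cuspidal case through Frobenius reciprocity on Amann's induced model, bounding $\dim(\mathcal{H}^{U_{m}})$ by the ($m$-bounded) number of relevant $U_{m}$-double cosets times $\dim\sigma$.
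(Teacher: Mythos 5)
Your strategy is genuinely different from the paper's --- it is essentially Ol'shanskii's original route via square-integrability --- and most of its skeleton is sound: the reduction to $K=U_m$, the count $\dim\mathcal{A}_m^{(\leq L)}\leq (L+1)[G_{x_0}:U_m]^2$ (which correctly uses normality of $U_m$ in $G_{x_0}$ and transitivity of $G_{x_0}$ on spheres), and the Schur-orthogonality argument showing that the $U_m$-bi-invariant matrix coefficients map onto $\End(\mathcal{H}^{U_m})$, so that $\dim(\mathcal{H}^{U_m})^2\leq\dim\mathcal{A}_m^{(\leq L)}$ \emph{once a uniform support bound $L=L(G,m)$ is available}. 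But that uniform bound is precisely where the content of the theorem lives in your approach, and you grant it rather than prove it. Two concrete obstacles: (1) the result of Amann quoted in the paper is purely qualitative (compact support, with no control on the support in terms of $m$), and upgrading it to ``$\langle\pi(g)v,u\rangle=0$ whenever $\dist(x_0,gx_0)>L(G,m)$'' uniformly over all super-cuspidal $\pi$ with $\mathcal{H}^{U_m}\neq 0$ is a genuine argument, not a bookkeeping step; your sketch via Tits' independence and Amann's induced model names the right ingredients but does not carry it out. (2) Amann's statement, as cited, applies to $v$ fixed by $G_S$ with $S\in M_\pi$ minimal, whereas you need it for arbitrary $v,u\in\mathcal{H}^{U_m}=\mathcal{H}^{G_{B_m(x_0)}}$; since $S\subseteq B_m(x_0)$ gives $\mathcal{H}^{G_S}\subseteq\mathcal{H}^{G_{B_m(x_0)}}$ and not the reverse, such vectors need not be $G_S$-fixed. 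The spherical case also rests on the unproved (though correct) assertion that the restriction to $G_{x_0}$ of the spherical series is independent of the continuous parameter.

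For comparison, the paper sidesteps all of this, including the case analysis by representation type. It uses the polar decomposition to invoke Demir's structure theorem $\contc(G,U_{n})=\contc(G_{x},U_{n})\,A\,\contc(G_{x},U_{n})$, where $A$ is generated by the single characteristic function of $U_{n}aU_{n}$ and $\dimvec\contc(G_{x},U_{n})=[G_{x}:U_{n}]$, together with commutativity of $\contc(G,G_x)$; admissibility (Theorem~\ref{thm:adm:com:G}) makes $\mathcal{H}^{U_n}$ a finite-dimensional irreducible module over this algebra, and Bernstein's bound for irreducible representations of algebras of this shape gives $N$. That route needs no square-integrability and no support estimates. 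If you wish to complete your version, the one statement you must actually prove is the uniform vanishing bound in the super-cuspidal case; everything else in your outline then assembles correctly.
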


\begin{proof}
It suffices to check the statement of the theorem for $K = U_n$, $n \in \NN$.  So fix $n \in \NN$.

Recall from \cite[Lemma~4.1]{FigaNebbia} that the  Hecke algebra $\contc(G,G_{x})$ is commutative under the convolution product.  As $U_{n}$ is a normal finite index subgroup of $G_{x}$, we have that $\dimvec(\contc(G_{x},U_{n})) = \dimvec (\CC[G_x/U_n]) = [G_{x}:~U_{n}]$.  Following the same calculations as in \cite{bernstein74-type-I} or \cite{demir04} by using the polar decomposition $G = G_{x}\langle a \rangle G_{x}$, one obtains Theorem 3.2 from \cite{demir04}: For a closed non-compact subgroup $G \leq \Aut(T_{d})$ acting transitively on the vertices and on the boundary of $T_d$, we have $\contc(G,U_{n})=\contc(G_{x},U_{n}) A \; \contc(G_{x},U_{n})$, where $A$ is the subalgebra generated by the characteristic function of $U_{n} a U_{n}$. 

By the general representation theory of Hecke algebras we know the following. Every irreducible unitary representation $(\pi, \mathcal{H})$ of our group $G$ induces an irreducible $*$-representation of the Hecke algebra $\contc(G,K)$ on the space of $K$-invariant vectors $\mathcal{H}^{K}$, where $K$ denotes a compact open subgroup of $G$. Thus, using the admissibility of all irreducible unitary representations of our group $G$, to obtain our conclusion it is enough to check that the dimension of every finite dimensional irreducible representation of the Hecke algebra $\contc(G,U_{n})$ is bounded above by a constant. Now the proof of \cite[Theorem 3.3]{demir04} (see also \cite{olshanskii80}) gives the constant $N$.  This finishes the proof.

\end{proof}

\begin{bibdiv}
\begin{biblist}

\bib{Amann}{thesis}{
author={Amann, Olivier},
 title={Group of tree-automorphisms and their unitary representations},
 note={PhD thesis},
 school={ETH Z\"urich},
 year={2003},
 }
 
\bib{bernstein74-type-I}{article}{
 author ={Bernstein, I.N.},
  title ={All reductive $p$-adic groups are of type I},
  journal ={Functional Anal. Appl.},
  note={English translation},
  volume =	{8},
  date ={1974},
  pages ={91--93},
}

\bib{BM00a}{article}{
   author={Burger, Marc},
   author={Mozes, Shahar},
   title={Groups acting on trees: from local to global structure},
   journal={Inst. Hautes \'Etudes Sci. Publ. Math.},
   number={92},
   date={2000},
   pages={113--150 (2001)},
}

\bib{BM00b}{article}{
   author={Burger, Marc},
   author={Mozes, Shahar},
   title={Lattices in products of trees},
   journal={Inst. Hautes \'Etudes Sci. Publ. Math.},
   number={92},
   date={2000},
   pages={151--194 (2001)},
}

\bib{CaCi}{unpublished}{
  author={Caprace, P-E.},
   author={Ciobotaru, C.},
   title={Gelfand pairs and strong transitivity for Euclidean buildings},
 note={To appear in `Ergodic Theory and Dynamical Systems', arXiv:1304.6210},
   doi={},
}

\bib{demir04}{article}{
  author ={Demir, Sel\c{c}uk},
  title ={Some finiteness results in the representation theory of isometry groups of regular trees},
  journal ={Geom. Dedicata},
  volume =	{105},
  pages ={189--207},
  date ={2004},
}

\bib{dixmier77}{book}{
  author ={Dixmier, Jacques},
  title ={C$^*$-algebras.},
  publisher ={Amsterdam: North-Holland Publishing Company},
  year ={1977},
}

\bib{FigaNebbia}{book}{
   author={Fig{\`a}-Talamanca, Alessandro},
   author={Nebbia, Claudio},
   title={Harmonic analysis and representation theory for groups acting on
   homogeneous trees},
   series={London Mathematical Society Lecture Note Series},
   volume={162},
   publisher={Cambridge University Press},
   place={Cambridge},
   date={1991},
}

\bib{Ol}{article}{
   author={Olshanskii, Grigorii Iosifovich},
   title={Classification of the irreducible representations of the
   automorphism groups of Bruhat-Tits trees},
   language={Russian},
   journal={Funkcional. Anal. i Prilo\v zen.},
   volume={11},
   date={1977},
   number={1},
   pages={32--42, 96},
}

\bib{olshanskii80}{article}{,
  author ={Olshanskii, Grigorii Iosifovich},
  title ={New ``large'' groups of type I.},
  journal =	{Akad. Nauk SSSR, Vsesoyuz. Inst. Nauchn. i Tekhn. Informatsii},
  volume =	 {16},
  year ={1980},
  pages ={31--52},
  language ={Russian},
}

\bib{Ti70}{article}{
   author={Tits, Jacques},
   title={Sur le groupe des automorphismes d'un arbre},
   language={French},
   conference={
      title={Essays on topology and related topics (M\'emoires d\'edi\'es
      \`a Georges de Rham)},
   },
   book={
      publisher={Springer},
      place={New York},
   },
   date={1970},
   pages={188--211},
}

\end{biblist}
\end{bibdiv}

\end{document}